\newcommand{\RR}{\mathbb{R}}
\def\bfa{\mathbf{a}}
\def\bfb{\mathbf{b}}
\def\bfe{\mathbf{e}}
\def\bff{\mathbf{f}}
\def\bfg{\mathbf{g}}
\def\bfp{\mathbf{p}}
\def\bfq{\mathbf{q}}
\def\bfr{\mathbf{r}}
\def\sgn{{\rm sgn}}
\def\bfv{\mathbf{v}}
\def\bfx{\mathbf{x}}
\def\bfy{\mathbf{y}}
\def\Pbar{{\overline P}}
\def\Qbar{{\overline Q}}
\def\bfgamma{{\boldsymbol \gamma}}
\newtheorem{theorem}{Theorem}
\newtheorem{lemma}{Lemma}
\title{On the injectivity of mean value mappings between quadrilaterals}
\author{
Michael S. Floater\footnote{
Department of Mathematics,
University of Oslo,
PO Box 1053, Blindern,
0316 Oslo,
Norway,
{\it email: michaelf@math.uio.no}}
\and
Georg Muntingh\footnote{
SINTEF, PO Box 124 Blindern, 0314 Oslo, Norway,
{\it email: Georg.Muntingh@sintef.no}}
}                                                                               
\begin{document}

\maketitle

\begin{abstract}
Mean value coordinates can be used to map one polygon into another,
with application to computer graphics and curve and surface modelling.
In this paper we show that if the polygons are quadrilaterals,
and if the target quadrilateral is convex, then the mapping is injective.
\end{abstract}

\noindent {\em Keywords: } Barycentric mapping, Mean value coordinates,
injectivity, Jacobian determinant.

\noindent {\em Math Subject Classification: 65D17, 26B10}

\section{Introduction}

Barycentric mapping has emerged as a convenient tool
for deforming shapes when modelling
and processing geometry.
For example, a planar curve can be deformed by
enclosing it in a polygon and deforming the polygon.
Using some choice of generalized barycentric coordinates
(GBCs) to represent each point of the polygon, the polygon can be deformed
by moving its vertices.
In effect, this defines a mapping $\bff: P \to Q$
from the initial polygon $P$,
the `domain' polygon, to a new polygon $Q$, the `target' polygon.
An early paper proposing such barycentric mapping both in $\RR^2$
and higher Euclidean dimension is that of
Warren~\cite{Warren:96}, who used a generalization of Wachspress'
rational coordinates for the barycentric representation of points
in a convex polygon or polytope.
Later, the discovery of mean value coordinates
enhanced the practicality of barycentric mapping
since these coordinates allow the domain to be
non-convex~\cite{Floater:03a,Dyken:09,Ju:05a,Floater:05,
Hormann:06,Lipman:07,Weber:11}.

In many applications of curve deformation we would
like to avoid the deformed curve having self-intersections.
Thus, we would like some kind of condition on $P$ and $Q$
that guarantees that $\bff$ is injective.
This was the motivation for the theoretical study made in~\cite{Floater:10a},
the main result of which was to show that
if $P$ and $Q$ are both convex polygons, then
the mapping determined by Wachspress's coordinates is injective.
It was also shown there, by way of counterexamples, that,
with $P$ and $Q$ again convex,
the mean value mapping is not always injective
when the number of vertices in $P$ (and in $Q$) is five or more.
This drawback of mean value coordinates was the motivation for
the method proposed in~\cite{Schneider:13},
in which an injective mapping is constructed
as the functional composition of several mean value mappings,
each of which is an injective perturbation of the identity mapping.

The results of~\cite{Floater:10a} left open the important case:
is a mean value mapping injective when $P$ and $Q$ are convex
quadrilaterals?
The purpose of this paper is to settle this question.
We will show in fact a more general result.
\begin{theorem}\label{thm:main}
If $P$ is any quadrilateral, convex or non-convex, and
if $Q$ is convex, then the mean value mapping $\bff:P\to Q$ is
injective.
\end{theorem}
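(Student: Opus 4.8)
The plan is to establish two facts about $\mathbf f$ and then combine them by a degree argument: first, that $\mathbf f$ carries $\partial P$ homeomorphically onto $\partial Q$, and second, that the Jacobian determinant $J_{\mathbf f}$ is positive throughout the interior of $P$. For the boundary behaviour I would invoke the standard edge property of mean value coordinates: on the edge $[\mathbf p_i,\mathbf p_{i+1}]$ the coordinates $\lambda_j$ with $j\ne i,i+1$ vanish while $\lambda_i,\lambda_{i+1}$ reduce to ordinary linear interpolation. Hence $\mathbf f$ restricts on each edge to the affine bijection onto $[\mathbf q_i,\mathbf q_{i+1}]$, and since $Q$ is convex its boundary is a Jordan curve; thus $\mathbf f|_{\partial P}$ is an orientation-consistent homeomorphism onto $\partial Q$, winding once about every interior point of $Q$. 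I would also record here that the coordinates are well defined, smooth in the interior and continuous up to $\partial P$ even when $P$ carries its one reflex vertex.

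The core is the Jacobian. Writing $\mathbf f=\sum_{i=1}^4\lambda_i\mathbf q_i$ and using $\sum_i\nabla\lambda_i=\mathbf 0$, a short computation gives
$$J_{\mathbf f}=\sum_{i<j}(\nabla\lambda_i\times\nabla\lambda_j)\,(\mathbf q_i\times\mathbf q_j)=2\bigl(G_{12}A_{124}+G_{13}A_{134}+G_{23}A_{234}\bigr),$$
where $G_{ij}:=\nabla\lambda_i\times\nabla\lambda_j$ and $A_{ijk}$ is the signed area of the triangle $\mathbf q_i\mathbf q_j\mathbf q_k$. Since $Q$ is convex, all four sub-triangle areas are positive and satisfy $A_{123}+A_{134}=A_{124}+A_{234}$, so $(A_{124},A_{134},A_{234})$ ranges over the open cone $\{(a,b,c):a,b,c>0,\ a+c>b\}$, whose extreme rays are $(1,0,0),(0,0,1),(1,1,0),(0,1,1)$. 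Evaluating the linear form $G_{12}a+G_{13}b+G_{23}c$ on these rays and using $\nabla\lambda_4=-(\nabla\lambda_1+\nabla\lambda_2+\nabla\lambda_3)$, the positivity of $J_{\mathbf f}$ for every convex $Q$ reduces to the single intrinsic statement
$$\nabla\lambda_i\times\nabla\lambda_{i+1}\ge 0\qquad(i=1,2,3,4,\ \text{indices mod }4)$$
at every interior point: a nonzero linear functional that is nonnegative on a full-dimensional cone is strictly positive on its interior, and the gradients span $\RR^2$ because $\sum_i\mathbf p_i\,(\nabla\lambda_i)^{\mathsf T}=\mathbf{I}$.

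This last inequality --- that the cross product of the gradients of consecutive mean value coordinates is nonnegative on any quadrilateral --- is the main obstacle, and it is a statement about $P$ alone. I would prove it by placing $\mathbf x$ at the origin (the coordinates are similarity invariant), writing $r_i=|\mathbf p_i|$ and letting $\alpha_i$ denote the angle subtended at $\mathbf x$ by $\mathbf p_i$ and $\mathbf p_{i+1}$, and then using the explicit gradient of the mean value weight $w_i=(\tan(\alpha_{i-1}/2)+\tan(\alpha_i/2))/r_i$ to turn $\nabla\lambda_i\times\nabla\lambda_{i+1}$ into a trigonometric expression in the four radii and angles. The task is to show this expression has a fixed sign; I expect the verification to hinge on the relation $\sum_i\alpha_i=2\pi$ together with cancellations special to having exactly four vertices --- consistent with the fact, noted in the introduction, that injectivity fails for $n\ge5$, so any correct argument must break down beyond the quadrilateral case.

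Finally, with $J_{\mathbf f}>0$ on the interior and $\mathbf f|_{\partial P}$ a degree-one homeomorphism onto the convex curve $\partial Q$, a standard degree count finishes the proof: for each $\mathbf y$ in the interior of $Q$ the topological degree of $\mathbf f$ equals the winding number $1$, and since every preimage contributes $+1$ under a positive Jacobian, $\mathbf y$ has exactly one preimage, while points outside $\overline Q$ have none. Hence $\mathbf f$ is a bijection of $\overline P$ onto $\overline Q$, and in particular injective.
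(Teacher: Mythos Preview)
Your plan is structurally the same as the paper's: reduce $J(\bff)>0$ for all convex $Q$ to a set of four sign conditions depending only on $P$, verify those conditions for the MV coordinates on a quadrilateral, and then upgrade local to global injectivity. Two remarks on the reduction and one on the missing core.

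\medskip
\textbf{Your four conditions versus the paper's.} The paper works with $D_i:=D(\phi_{i-1},\phi_i,\phi_{i+1})$ and proves $D_i>0$ for each $i$ (Theorem~\ref{thm:D123}). Your conditions are $G_{i,i+1}:=\nabla\lambda_i\times\nabla\lambda_{i+1}\ge 0$. These are related by $G_{i,i+1}=D_i+D_{i+1}$ (add the third columns in the $3\times3$ determinants and use $\sum_j\phi_j=1$, $\sum_j\nabla\phi_j=0$), so the paper's hypothesis is strictly stronger than yours, while yours is the sharp necessary-and-sufficient condition for $J>0$ to hold for every convex $Q$. This is a nice observation, but in practice it does not shorten the computation: to verify $G_{i,i+1}\ge 0$ one is led to essentially the same trigonometric calculation that yields $D_i>0$.

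\medskip
\textbf{The missing core.} Your proposal stops exactly where the paper's real work begins. The verification of the four inequalities is not a routine ``trigonometric expression'' check; in the paper it rests on (i) a new compact formula for $\nabla w_i$ as a combination of only two unit normals (Theorem~\ref{thm:omegaigradient}), (ii) the identity of Lemma~\ref{lem:Georg1} expressing $t_4$ rationally in $t_1,t_2,t_3$ via $\sum_i\alpha_i=2\pi$, and (iii) a careful expansion in which a potentially sign-indefinite term $(u_1u_3-u_2u_4)K$ is shown to vanish identically. Your intuition that the argument ``hinges on $\sum_i\alpha_i=2\pi$ together with cancellations special to four vertices'' is exactly right, but the cancellation is delicate and is the substance of Theorem~\ref{thm:D123}. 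As written, this step is a genuine gap.

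\medskip
\textbf{Local to global.} Your degree argument is a legitimate alternative to the paper's Kestelman-type compactness argument in Section~\ref{sec:global}. It has the minor advantage that it only requires $J>0$ on the open polygon $P$ (preimages of interior points of $Q$ lie in $P$ because $\phi_i>0$ there), so you can dispense with the boundary analysis of Lemma~\ref{lem:D412_boundary}. You should, however, state explicitly that $\bff(P)\subset Q$ (strict positivity of the $\phi_i$, Lemma~\ref{lem:tsumpos}) so that boundary points of $Q$ have no interior preimages; you use this implicitly.
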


Figures~\ref{fig:mvconvex} and~\ref{fig:mvconcave} illustrate the theorem.
\begin{figure}[ht]
\centerline{\includegraphics[width=0.6\textwidth]{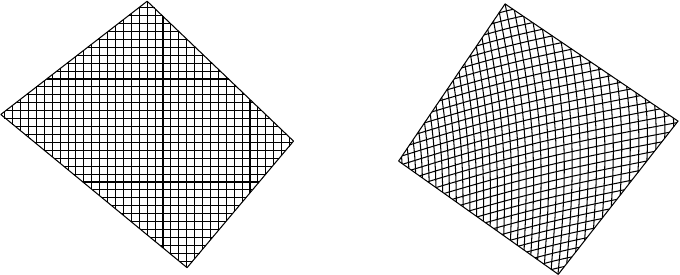}}
\caption{MV mapping, convex to convex.} 
\label{fig:mvconvex}
\end{figure}
\begin{figure}[ht]
\centerline{\includegraphics[width=0.6\textwidth]{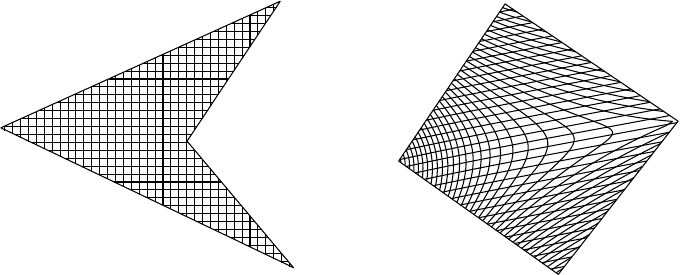}}
\caption{MV mapping, concave to convex.} 
\label{fig:mvconcave}
\end{figure}

So in the quadrilateral case, mean value mappings have the same
injectivity property as harmonic mappings;
see~\cite{Rado:26,Kneser:26,Choquet:45}.

The essential part of the proof of Theorem~\ref{thm:main} is to show that
when the vertices of~$P$ and~$Q$ have the same orientation,
the Jacobian (determinant) $J(\bff)$ is positive both in~$P$
and at all boundary points of $P$ except the four vertices.
This requires computing the gradients of
the mean value coordinates.
We will derive in Theorem~\ref{thm:omegaigradient} a new, simple
formula for the gradients of the homogeneous form of the coordinates.
This formula applies to an arbitrary polygon and might be
useful in other applications, for example, to derive bounds on
the gradients for finite element applications;
see~\cite{Rand:13,Floater:14a,Floater:16}.

\section{Barycentric mappings}\label{sec:mappings}

Let $P \subset \RR^2$ be a polygon, viewed as an open set, with vertices
$\bfp_1,\ldots,\bfp_n$, ordered anticlockwise.
We denote by $\partial P$ the boundary of $P$
and by $\Pbar$ its closure.
Let $\phi_1,\ldots,\phi_n: \Pbar \to \RR$
be continuous functions such that
\begin{equation} \label{eq:bary1}
 \sum_{i=1}^n \phi_i(\bfx) = 1, \quad \bfx \in \Pbar,
\end{equation}
\begin{equation} \label{eq:bary2}
 \sum_{i=1}^n \phi_i(\bfx) \bfp_i = \bfx, \quad \bfx \in \Pbar,
\end{equation}
and such that if
\begin{equation} \label{eq:xedge}
\bfx = (1-\mu)\bfp_\ell + \mu \bfp_{\ell+1}
\end{equation}
for some $\ell$ and some $\mu \in [0,1]$, with indices treated cyclically
($\bfp_{n+1} := \bfp_1$ and so on), then
\begin{equation}\label{eq:phibdy}
 \phi_\ell(\bfx) = 1-\mu, \quad \phi_{\ell+1}(\bfx) = \mu,
  \quad \hbox{and} 
\quad \hbox{$\phi_i(\bfx) = 0, \quad i \ne \ell,\ell+1$}.
\end{equation}
Then $\phi_1,\ldots,\phi_n$ are
\emph{generalized barycentric coordinates} (GBCs) for $P$.

Next suppose $Q \subset \RR^2$ is another polygon, with vertices
$\bfq_1,\ldots,\bfq_n$, ordered anticlockwise.
Then we call the mapping $\bff : \Pbar \to \RR^2$ defined by
\begin{equation} \label{eq:bary3}
 \bff(\bfx) = \sum_{i=1}^n
      \phi_i(\bfx) \bfq_i, \qquad \bfx \in \Pbar,
\end{equation}
a \emph{barycentric mapping}.

Equation (\ref{eq:bary1}) implies that for $\bfx \in \Pbar$,
the point $\bff(\bfx)$ is an affine combination of the
points $\bfq_i$. 
If the $\phi_i$ are non-negative then this combination is also convex
and if in addition~$Q$ is convex then $\bff(\Pbar) \subseteq \Qbar$.
Moreover, if the $\phi_i$ are positive in $P$, then
$\bff(P) \subseteq Q$.

The linear precision property (\ref{eq:bary2}) implies that
if  $Q = P$ then $\bff$ is the identity mapping,
and by the continuity of the $\phi_i$, $\bff$ is continuous
and so if~$Q$ is a perturbation of $P$ then $\bff$
is a perturbation of the identity mapping.
By~(\ref{eq:phibdy}),
if $\bfx$ is the boundary point in (\ref{eq:xedge}) then
\begin{equation} \label{eq:fedge}
 \bff(\bfx) = (1-\mu)\bfq_\ell + \mu \bfq_{\ell+1}.
\end{equation}
Thus $\bff$ maps $\partial P$ to $\partial Q$
in a piecewise linear fashion,
mapping vertices and edges of $\partial P$
to corresponding vertices and edges of $\partial Q$.

The question we will address is whether $\bff$ is injective.
If both $P$ and $Q$ are convex and if the GBCs are Wachspress coordinates
then $\bff$ is $C^\infty$ on~$\Pbar$ (technically, $\bff$
is $C^\infty$ in an open set containing $\Pbar$), and
it was shown in~\cite{Floater:10a} that
the Jacobian of $\bff$ is positive on $\Pbar$.
Then, by a theorem of \cite{Kestelman:1971},
the positivity of $J(\bff)$ on $\Pbar$ implies that $\bff$ is injective.

We will use a similar approach to prove~Theorem~\ref{thm:main}.
But first let us observe that there is
a simple formula for $J(\bff)$ at a vertex of $P$
for \emph{any} barycentric mapping $\bff$
\emph{as long as} $\bff$ is $C^1$ at the vertex.
Recall that the Jacobian of~$\bff$ is the determinant
$$ J(\bff) := | \bff_x, \bff_y| =
   \left| \begin{matrix}
     f_x & f_y \\
     g_x & g_y
   \end{matrix}
   \right|, $$
where
$$ h_x := \frac{\partial h}{\partial x}, \qquad
   h_y := \frac{\partial h}{\partial y}, $$
and $\bfx = (x,y)$ and $\bff(\bfx) = (f(\bfx),g(\bfx))$.
For points $\bfp = (p^1,p^2)$, $\bfq = (q^1,q^2)$, $\bfr = (r^1,r^2)$
in $\RR^2$ let
\begin{equation}\label{eq:triangdet}
   A(\bfp,\bfq,\bfr) :=
\frac{1}{2} \left| \begin{matrix}
     1   & 1   & 1 \\
     p^1 & q^1 & r^1 \\
     p^2 & q^2 & r^2 \\
     \end{matrix}
   \right|,
\end{equation}
the signed area of the triangle
$[\bfp,\bfq,\bfr]$.

We will often use the two-dimensional cross product,
$$ \bfa \times \bfb :=
\left| \begin{matrix}
	a^1 & b^1 \\ a^2 & b^2
 \end{matrix}
 \right| = a^1b^2 - a^2b^1 $$
for vectors $\bfa = (a^1,a^2)$ and $\bfb = (b^1,b^2)$.
So, for example,
$$ J(\bff) = \bff_x \times \bff_y, \qquad
   A(\bfp,\bfq,\bfr) =
  \frac{1}{2} (\bfq-\bfp) \times (\bfr - \bfq). $$

\begin{lemma}\label{lem:Jvertex}
If $\bff:\Pbar \to \RR^2$ is any barycentric mapping
that is $C^1$ at $\bfp_i$ then
$$ J(\bff)(\bfp_i) =
	\frac{A(\bfq_{i-1},\bfq_i,\bfq_{i+1})}
	{A(\bfp_{i-1},\bfp_i,\bfp_{i+1})}.
$$
\end{lemma}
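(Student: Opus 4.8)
The plan is to use the fact that, since $\bff$ is $C^1$ at $\bfp_i$, it agrees to first order with its linear part $D\bff(\bfp_i)$ there, and that the piecewise-linear boundary formula (\ref{eq:fedge}) tells us exactly how this linear part acts on the two edge vectors emanating from $\bfp_i$. Writing $\bfu := \bfp_{i-1}-\bfp_i$ and $\bfv := \bfp_{i+1}-\bfp_i$, I would first read off the two one-sided directional derivatives of $\bff$ at $\bfp_i$ along the incident edges. Parametrizing the edge $[\bfp_{i-1},\bfp_i]$ as $\bfx = \bfp_i + t\bfu$ (taking $\ell = i-1$, $\mu = 1-t$) and the edge $[\bfp_i,\bfp_{i+1}]$ as $\bfx = \bfp_i + s\bfv$ (taking $\ell = i$, $\mu = s$), formula (\ref{eq:fedge}) gives $\bff(\bfp_i+t\bfu) = \bfq_i + t(\bfq_{i-1}-\bfq_i)$ and $\bff(\bfp_i+s\bfv) = \bfq_i + s(\bfq_{i+1}-\bfq_i)$. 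Since differentiability of $\bff$ at $\bfp_i$ forces each one-sided directional derivative along $\bfw$ to equal $D\bff(\bfp_i)\,\bfw$, I obtain
\begin{equation*}
 D\bff(\bfp_i)\,\bfu = \bfq_{i-1}-\bfq_i, \qquad
 D\bff(\bfp_i)\,\bfv = \bfq_{i+1}-\bfq_i.
\end{equation*}

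Because $\bfp_{i-1},\bfp_i,\bfp_{i+1}$ are vertices of a nondegenerate polygon, they are not collinear, so $\bfu\times\bfv\ne 0$ and $\{\bfu,\bfv\}$ is a basis of $\RR^2$; this is exactly the condition that the denominator $A(\bfp_{i-1},\bfp_i,\bfp_{i+1})$ be nonzero. The two identities above therefore determine the linear map $D\bff(\bfp_i)$ completely. To extract its determinant I would invoke the standard identity $(L\bfu)\times(L\bfv) = \det(L)\,(\bfu\times\bfv)$, valid for any linear $L$ on $\RR^2$, with $L = D\bff(\bfp_i)$. Since $J(\bff)(\bfp_i) = \det D\bff(\bfp_i)$, this gives
\begin{equation*}
 J(\bff)(\bfp_i)
 = \frac{(\bfq_{i-1}-\bfq_i)\times(\bfq_{i+1}-\bfq_i)}
        {(\bfp_{i-1}-\bfp_i)\times(\bfp_{i+1}-\bfp_i)}.
\end{equation*}

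It then remains only to recognize each cross product as a signed triangle area. From the form $A(\bfp,\bfq,\bfr) = \tfrac{1}{2}(\bfq-\bfp)\times(\bfr-\bfq)$ recorded after (\ref{eq:triangdet}) and the antisymmetry of the cross product, one finds
\begin{equation*}
 (\bfp_{i-1}-\bfp_i)\times(\bfp_{i+1}-\bfp_i) = -2\,A(\bfp_{i-1},\bfp_i,\bfp_{i+1}),
\end{equation*}
and likewise with $\bfp$ replaced by $\bfq$. The factors $-2$ cancel in the quotient, yielding precisely $A(\bfq_{i-1},\bfq_i,\bfq_{i+1})/A(\bfp_{i-1},\bfp_i,\bfp_{i+1})$, as claimed.

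I do not anticipate a serious obstacle. The one delicate point is the passage from the piecewise-linear boundary data to the full Jacobian, which is exactly where the $C^1$ hypothesis is indispensable: it guarantees that the value of $D\bff(\bfp_i)$ on the two edge directions determines it on all of $\RR^2$, so that the boundary behaviour alone fixes the Jacobian at the vertex. Everything after that is bookkeeping converting cross products of edge vectors into the signed areas $A(\cdot,\cdot,\cdot)$.
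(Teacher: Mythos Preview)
Your proof is correct and follows essentially the same approach as the paper: compute the one-sided directional derivatives of $\bff$ at $\bfp_i$ along the two incident edges using the piecewise-linear boundary formula, then take their cross product to extract $J(\bff)(\bfp_i)$. The only cosmetic difference is that you invoke the identity $(L\bfu)\times(L\bfv)=\det(L)\,(\bfu\times\bfv)$ directly, whereas the paper expands the directional derivatives in the coordinate basis $\bff_x,\bff_y$ before taking the cross product.
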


\begin{proof}
At a point where $\bff$ is $C^1$,
it has a directional derivative $D_\bfv \bff$ in
any vector direction $\bfv=(v^1,v^2)$ given by
$$ D_\bfv \bff := 
\lim_{\epsilon \to 0} \frac{\bff(\bfx+ \epsilon \bfv) -\bff(\bfx)}{\epsilon}
= v^1 \bff_x + v^2 \bff_y. $$
Then at $\bfp_i$,
\begin{align}
\label{eq:direcderv1}
D_{\bfp_i - \bfp_{i-1}} \bff &= 
(\bfp_i^1 - \bfp_{i-1}^1) \bff_x + 
(\bfp_i^2 - \bfp_{i-1}^2) \bff_y, \\
\label{eq:direcderv2}
D_{\bfp_{i+1} - \bfp_i} \bff &= 
(\bfp_{i+1}^1 - \bfp_i^1) \bff_x + 
(\bfp_{i+1}^2 - \bfp_i^2) \bff_y,
\end{align}
and by the piecewise linearity of $\bff$ on $\partial P$ given by
(\ref{eq:xedge}) and (\ref{eq:fedge}),
\begin{align*}
 D_{\bfp_{i+1}-\bfp_i} \bff
&= \lim_{\mu \to 0+} \frac{\bff(\bfp_i + \mu (\bfp_{i+1}-\bfp_i)) -\bff(\bfp_i)}{\mu} \cr
&= \lim_{\mu \to 0+} \frac{(\bfq_i + \mu (\bfq_{i+1}-\bfq_i)) -\bfq_i}{\mu}
= \bfq_{i+1} -\bfq_i,
\end{align*}
and similarly,
$$ D_{\bfp_i-\bfp_{i-1}} \bff = \bfq_i -\bfq_{i-1}. $$
So from (\ref{eq:direcderv1}--\ref{eq:direcderv2}) we deduce that
\begin{align*}
	& (\bfq_i -\bfq_{i-1} ) \times (\bfq_{i+1}-\bfq_i) =
(D_{\bfp_i-\bfp_{i-1}}\bff)  \times (D_{\bfp_{i+1}-\bfp_i} \bff) \cr
	&= \big((\bfp_i^1 - \bfp_{i-1}^1) (\bfp_{i+1}^2 - \bfp_i^2)
- (\bfp_i^2 - \bfp_{i-1}^2) (\bfp_{i+1}^1 - \bfp_i^1) \big)
	(\bff_x  \times \bff_y),
\end{align*}
and so
$$ J(\bff) =
\frac{(\bfq_i -\bfq_{i-1} ) \times (\bfq_{i+1}-\bfq_i)}
{(\bfp_i -\bfp_{i-1} ) \times (\bfp_{i+1}-\bfp_i)}.
$$
\end{proof}

Suppose now that $\bff:\Pbar \to \RR^2$ is a barycentric mapping
that is $C^1$ on $\Pbar$.
If both $P$ and $Q$ are convex then at each vertex $\bfp_i$,
both $A(\bfp_{i-1},\bfp_i,\bfp_{i+1})$ and
$A(\bfq_{i-1},\bfq_i,\bfq_{i+1})$ are positive
and the lemma implies that $J(\bff)$ is positive at~$\bfp_i$.
This agrees with the Wachspress case studied in~\cite{Floater:10a}.
On the other hand, suppose that $P$ is non-convex and $Q$ convex.
Then $P$ must have a non-convex
vertex, i.e. a vertex $\bfp_i$ with $A(\bfp_{i-1},\bfp_i,\bfp_{i+1}) < 0$.
Then $J(\bff)$ is negative at $\bfp_i$ while it is positive
at the convex vertices of $P$.
We conclude that $J(\bff)$ must change sign in $\partial P$
and $\bff$ cannot be injective.
The same conclusion can be drawn in the case that $P$ is convex
and $Q$ is concave.
A similar observation concerning polynomial mappings from the unit square
to a concave quadrilateral was made in \cite{Gravesen:2014}.

With these facts in mind, it may seem surprising that
the mean value mapping from a concave quadrilateral to
a convex quadrilateral can be injective.
But this does not contradict Lemma~\ref{lem:Jvertex} because
mean value coordinates are \emph{not}~$C^1$ at the polygon vertices.
This lack of smoothness can thus be viewed as
an \emph{advantage} of mean value coordinates, compared
with, for example, Wachspress coordinates.
This same lack of smoothness must also be inherent in harmonic coordinates.
A referee pointed out to us that a simple argument
in \cite[Figure 4]{Anisimov:2016} shows that if $\bfp_i$ is
a non-convex vertex and the coordinates
are non-negative in $P$ (which is the case
for harmonic coordinates) then $\phi_i$ cannot be $C^1$ at $\bfp_i$.

Suppose now that $\bff:\Pbar \to \RR^2$ is a barycentric mapping
and that $\bfx \in \Pbar$ is a point at which $f$ is $C^1$.
A useful identity was derived in~\cite{Floater:10a} that
relates $J(\bff)(\bfx)$ to the signed areas of triangles formed by
the vertices $\bfq_i$ of~$Q$.
For differentiable bivariate functions $a,b,c$, let
$D(a,b,c)$ denote the $3 \times 3$ determinant
\begin{equation}\label{eq:det}
  D(a,b,c) :=
  \left| \begin{matrix}
     a & b & c \\
     a_x & b_x & c_x \\
     a_y & b_y & c_y
     \end{matrix}
   \right|.
\end{equation}
Then \cite[Lemma 1]{Floater:10a},
\begin{equation}\label{eq:J}
 J(\bff)(\bfx) = 2 \sum_{1 \le i < j < k \le n}
                  D(\phi_i,\phi_j,\phi_k)(\bfx)
		 \, A(\bfq_i,\bfq_j,\bfq_k).
\end{equation}
From this it follows that if $Q$ is convex and if
\begin{equation}\label{eq:Dijkpos}
	D(\phi_i,\phi_j,\phi_k)(\bfx) \ge 0,
\qquad 1 \le i < j < k \le n,
\end{equation}
with strict inequality for at least one choice of
$i,j,k$, then $J(\bff)(\bfx) > 0$.

\section{Mean value coordinates}
Consider now the mean value (MV) coordinates~\cite{Floater:03a,Hormann:06}.
These can be defined as follows; see Figure~\ref{fig:mvnotation}.
\begin{figure}[ht]
\centerline{\includegraphics[width=0.25\textwidth]{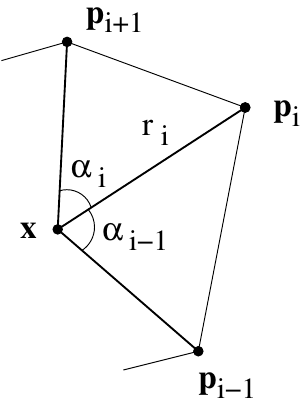}}
\caption{Notation for mean value coordinates.} 
\label{fig:mvnotation}
\end{figure}
For $\bfx \in P$,
\begin{equation}\label{eq:mv}
\phi_i(\bfx) := w_i(\bfx) / W(\bfx),
\end{equation}
where
\begin{equation}\label{eq:wi2}
w_i := \frac{t_{i-1} + t_i}{r_i}, \qquad
W := \sum_{i=1}^n w_i,
\end{equation}
and where $r_i$ is the Euclidean distance $r_i := \|\bfp_i - \bfx \|$,
and $t_i$ is the half-angle tangent $t_i := \tan(\alpha_i/2)$.
The angle $\alpha_i \in (-\pi, \pi)$ is the \emph{signed} angle at
$\bfx$ in the triangle
$[\bfx,\bfp_i,\bfp_{i+1}]$, i.e.,
$$ \sgn(\alpha_i) := \sgn(\bfe_i \times \bfe_{i+1}), $$
where $\bfe_i$ is the unit vector $\bfe_i := (\bfp_i - \bfx)/r_i$.

It was shown in~\cite{Hormann:06} that
the MV coordinates $\phi_1,\ldots,\phi_n$ are
GBCs, i.e., they satisfy the linear precision properties
(\ref{eq:bary1}) and (\ref{eq:bary2}) in $P$,
and they have a unique continuous extension to $\partial P$,
satisfying the Lagrange property (\ref{eq:phibdy}) on $\partial P$.

Due to the anticlockwise ordering of the $\bfp_i$,
$\alpha_i > 0$ in $P$ in the case that $P$ is convex,
but for general $P$,
the sign of $\alpha_i$ can be any value in $\{-1,0,1\}$.

The half-angle tangent $t_i$ comes from the integral
definition of mean value coordinates for positive $\alpha_i$
of~\cite{Floater:03a}, where it appears in the form
$$ t_i = \frac{1 - c_i}{s_i}, $$
where $s_i := \sin(\alpha_i)$ and
$c_i := \cos(\alpha_i)$.
We can therefore compute $t_i$ without needing to
evaluate trigonometric functions by the formula
\begin{equation}\label{eq:tifirst}
 t_i = \frac{1 - \bfe_i \cdot \bfe_{i+1}}{\bfe_i \times \bfe_{i+1}},
\end{equation}
when the $\alpha_i$ are positive.
This formula is also correct for $\alpha_i < 0$,
but requires defining the special case
$t_i :=0$ if $\alpha_i = 0$. This can be avoided by
using the alternative formula
\begin{equation}\label{eq:tisecond}
 t_i = \frac{s_i}{1+c_i} =
    \frac{\bfe_i \times \bfe_{i+1}}{1 + \bfe_i \cdot \bfe_{i+1}}.
\end{equation}

We note that an alternative way
of expressing $\phi_i$ even when $\bfx$ is a boundary point
was proposed recently in~\cite{Fuda:2024}, where
the issue of how to compute $\phi_i$ from the point of view of
numerical stability has been studied in depth.

\section{Gradients of mean value coordinates}

In order to apply the condition (\ref{eq:Dijkpos}) to MV coordinates,
we need a suitable formula for their gradients.
Applying the quotient rule to (\ref{eq:mv}) gives
$$ \nabla \phi_i = \frac{\nabla w_i}{W} - \frac{w_i \nabla W}{W^2}, $$
and so it is sufficient to find a formula for $\nabla w_i$.
A formula for $\nabla w_i$ was derived in~\cite[Sec.\ 4.1]{Floater:14b},
but it expresses $\nabla w_i$ as a linear combination of four vectors.
We now derive a more compact formula for $\nabla w_i$
as a linear combination of just two vectors.

Here and later we will use the well known half-angle tangent
identities
\begin{equation}\label{eq:sct}
 s_j = \frac{2t_j}{1 + t_j^2}, \qquad
   c_j = \frac{1-t_j^2}{1 + t_j^2}.
\end{equation}
For any vector $\bfa=(a_1,a_2) \in \RR^2$, we define the vector
$$ \bfa^\perp := (-a_2,a_1), $$
which is the rotation of $\bfa$ through a positive angle of $\pi/2$.
We will show:
\begin{theorem}\label{thm:omegaigradient}
\begin{equation}\label{eq:gradient}
\nabla w_i = \frac{1}{2r_i}
(u_{i-1} \bfe_{i-1}^\perp - u_i \bfe_{i+1}^\perp),
\end{equation}
where
$$ u_j := \Big( \frac{1}{r_j} + \frac{1}{r_{j+1}} \Big)
	(1 + t_j^2). $$
\end{theorem}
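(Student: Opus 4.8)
The plan is to compute $\nabla w_i$ directly from its definition $w_i = (t_{i-1}+t_i)/r_i$ in (\ref{eq:wi2}), reducing everything to the gradients of the two elementary quantities $r_i$ and $t_i$. Since $r_i = \|\bfp_i - \bfx\|$, a one-line computation gives $\nabla r_i = -\bfe_i$. For the tangent, I would first find the gradient of the signed angle $\alpha_i$. Writing $\beta_i$ for the argument (polar angle) of $\bfp_i - \bfx$, a direct differentiation of $\beta_i = \mathrm{atan2}(p_i^2-y,\, p_i^1-x)$ yields the clean formula $\nabla\beta_i = -\bfe_i^\perp/r_i$. Because $\alpha_i = \beta_{i+1}-\beta_i$, this gives
$$\nabla\alpha_i = \frac{\bfe_i^\perp}{r_i} - \frac{\bfe_{i+1}^\perp}{r_{i+1}},$$
and then the chain rule together with $\frac{d}{d\alpha}\tan(\alpha/2) = \tfrac12(1+t_i^2)$ produces
$$\nabla t_i = \frac{1+t_i^2}{2}\left(\frac{\bfe_i^\perp}{r_i} - \frac{\bfe_{i+1}^\perp}{r_{i+1}}\right).$$

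Next I would assemble $\nabla w_i$ via the product rule, obtaining
$$\nabla w_i = \frac{\nabla t_{i-1}+\nabla t_i}{r_i} + \frac{(t_{i-1}+t_i)\,\bfe_i}{r_i^2},$$
and substitute the two tangent gradients. Collecting the $\bfe_{i-1}^\perp$ and $\bfe_{i+1}^\perp$ terms already reproduces part of the claimed expression (\ref{eq:gradient}); the remaining terms are proportional to $\bfe_i^\perp$ and $\bfe_i$. The whole statement then reduces to showing that these leftover terms supply exactly the missing $1/r_i$ contributions to $u_{i-1}$ and $u_i$, i.e.\ to the single vector identity
$$(t_i^2 - t_{i-1}^2)\,\bfe_i^\perp + 2(t_{i-1}+t_i)\,\bfe_i = (1+t_{i-1}^2)\,\bfe_{i-1}^\perp - (1+t_i^2)\,\bfe_{i+1}^\perp.$$

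This last identity is the crux, and verifying it is the step I expect to require the most care. The idea is to resolve the neighbouring rotated vectors in the orthonormal frame $\{\bfe_i, \bfe_i^\perp\}$. Since $\bfe_{i-1}$ is $\bfe_i$ rotated by $-\alpha_{i-1}$ and $\bfe_{i+1}$ is $\bfe_i$ rotated by $\alpha_i$, and since taking $\perp$ commutes with rotation, I would write
$$\bfe_{i-1}^\perp = \sin\alpha_{i-1}\,\bfe_i + \cos\alpha_{i-1}\,\bfe_i^\perp, \qquad \bfe_{i+1}^\perp = -\sin\alpha_i\,\bfe_i + \cos\alpha_i\,\bfe_i^\perp.$$
Substituting these into the right-hand side and invoking the half-angle identities (\ref{eq:sct}) in the form $(1+t_j^2)\sin\alpha_j = 2t_j$ and $(1+t_j^2)\cos\alpha_j = 1-t_j^2$ collapses the coefficients of $\bfe_i$ and $\bfe_i^\perp$ to $2(t_{i-1}+t_i)$ and $t_i^2-t_{i-1}^2$ respectively, matching the left-hand side. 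Combining this identity with the earlier collection of terms and factoring out $1/(2r_i)$ then yields (\ref{eq:gradient}) with $u_j = (1/r_j + 1/r_{j+1})(1+t_j^2)$, completing the proof.
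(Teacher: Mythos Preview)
Your proposal is correct and follows essentially the same route as the paper: both compute $\nabla r_i=-\bfe_i$ and $\nabla t_j=\tfrac{1+t_j^2}{2}\big(\bfe_j^\perp/r_j-\bfe_{j+1}^\perp/r_{j+1}\big)$ (the paper writes the coefficient as $t_j/s_j$ and quotes it from~\cite{Floater:14b}), apply the product rule, and then eliminate the $\bfe_i,\bfe_i^\perp$ terms using the expansions $\bfe_{i-1}^\perp=s_{i-1}\bfe_i+c_{i-1}\bfe_i^\perp$ and $\bfe_{i+1}^\perp=-s_i\bfe_i+c_i\bfe_i^\perp$ together with the half-angle identities~(\ref{eq:sct}). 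The only difference is bookkeeping: the paper treats $t_{i-1}/r_i$ and $t_i/r_i$ separately so that the residual $\bfe_i^\perp/r_i^2$ pieces visibly cancel, whereas you bundle everything into a single vector identity; the underlying algebra is identical.
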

Notice that $u_j > 0$ and so
the theorem shows that
the vector $\nabla w_i$ is a positive linear combination of
the two vectors $\bfe_{i-1}^\perp$ and $-\bfe_{i+1}^\perp$.

\begin{proof}
Following~\cite[Sec.\ 4.1]{Floater:14b}, we
can compute $\nabla w_i$ by writing $w_i$ as
$$ w_i = \frac{t_{i-1}}{r_i} + \frac{t_i}{r_i}, $$
and applying the quotient rule to each part.
It was shown in~\cite[Sec.\ 4.1]{Floater:14b} that
$$ \nabla t_j = \frac{t_j}{s_j} 
\left(\frac{\bfe_j^\perp}{r_j} - \frac{\bfe_{j+1}^\perp}{r_{j+1}} \right), $$
which was then combined with the fact that $\nabla r_i = - \bfe_i$ to
show that
\begin{align}
\nabla \Big(\frac{t_{i-1}}{r_i} \Big)&= 
\frac{t_{i-1}}{r_i s_{i-1}}
\Big( \frac{\bfe_{i-1}^\perp}{r_{i-1}} - \frac{\bfe_i^\perp}{r_i} \Big)
+ \frac{t_{i-1}}{r_i^2} \bfe_i, \label{eq:I1grad1} \\
\nabla \Big(\frac{t_i}{r_i} \Big)&= 
\frac{t_i}{r_i s_i}
\Big( \frac{\bfe_i^\perp}{r_i} - \frac{\bfe_{i+1}^\perp}{r_{i+1}} \Big)
+ \frac{t_i}{r_i^2} \bfe_i. \label{eq:I2grad2}
\end{align}

Now let us observe that $\bfe_i$ and $\bfe_i^\perp$
form an orthonormal system of vectors in $\RR^2$, and so
$$
\bfe_{i-1}^\perp = \cos\big(\frac{\pi}{2}-\alpha_{i-1}\big) \bfe_i +
\sin\big(\frac{\pi}{2}-\alpha_{i-1}\big) \bfe_i^\perp
= s_{i-1}\bfe_i + c_{i-1}\bfe_i^\perp.
$$
Solving this for $\bfe_i$ gives
$$
\bfe_i = \frac{1}{s_{i-1}} \bfe_{i-1}^\perp
- \frac{c_{i-1}}{s_{i-1}} \bfe_i^\perp. $$
Then substituting this into (\ref{eq:I1grad1}),
we deduce
$$
\nabla \Big(\frac{t_{i-1}}{r_i} \Big)= 
\frac{t_{i-1}}{r_i s_{i-1}}
\Big( \frac{1}{r_{i-1}} + \frac{1}{r_i} \Big)
  \bfe_{i-1}^\perp
- \frac{t_{i-1}}{r_i^2 s_{i-1}}
 (1 + c_{i-1}) \bfe_i^\perp,
 $$
and using the facts
$$ \frac{t_j}{s_j} = \frac{1 + t_j^2}{2}, \qquad
	\frac{t_j(1+c_j)}{s_j} = 1, $$
we find
\begin{equation}\label{eq:I1grad1new}
\nabla \Big(\frac{t_{i-1}}{r_i} \Big)= 
\frac{u_{i-1}}{2r_i} \bfe_{i-1}^\perp
-\frac{\bfe_i^\perp}{r_i^2}. 
\end{equation}

Similarly,
$$
\bfe_{i+1}^\perp = \cos\big(\frac{\pi}{2}+\alpha_i\big) \bfe_i +
\sin\big(\frac{\pi}{2}+\alpha_i\big) \bfe_i^\perp
= -s_i\bfe_i + c_i\bfe_i^\perp,
$$
and so
$$
\bfe_i = \frac{c_i}{s_i} \bfe_i^\perp 
- \frac{1}{s_i} \bfe_{i+1}^\perp, $$
and substituting this into
(\ref{eq:I2grad2}) leads to
$$
\nabla \Big(\frac{t_i}{r_i} \Big)= 
\frac{t_i}{r_i^2 s_i} (1 + c_i) \bfe_i^\perp
-\frac{t_i}{r_i s_i}
\Big( \frac{1}{r_i} + \frac{1}{r_{i+1}} \Big)
  \bfe_{i+1}^\perp,
 $$
and therefore,
\begin{equation}\label{eq:I1grad2new}
\nabla \Big(\frac{t_i}{r_i} \Big) = 
\frac{\bfe_i^\perp}{r_i^2} 
- \frac{u_i}{2r_i} \bfe_{i+1}^\perp.
\end{equation}
Taking the sum of (\ref{eq:I1grad1new}) and 
(\ref{eq:I1grad2new}),
the term $\bfe_i^\perp/r_i^2$ cancels out
and we are left with the formula (\ref{eq:gradient}).
\end{proof}

We initially derived (\ref{eq:gradient}) using the integral formula
for $w_i$ and differentiating the integrand.
However, this method of proof is rather long, and we saw,
with the benefit of hindsight,
that (\ref{eq:gradient}) follows more easily from the calculations
of~\cite{Floater:14b}.

\section{Quadrilateral MV mappings}

We will assume \emph{for the rest of the paper} that
$P$ is a quadrilateral, i.e., $n=4$, and
that $\phi_1,\phi_2,\phi_3,\phi_4$ are
the MV coordinates with respect to $P$.
Then the determinant condition (\ref{eq:Dijkpos}) reduces to
\begin{equation}\label{eq:Dijkposquad}
D_i(\bfx) := D(\phi_{i-1},\phi_i,\phi_{i+1})(\bfx) \ge 0,
\qquad i=1,2,3,4,
\end{equation}
with strict inequality for at least one $i$.
In this section we show that this condition holds for all $\bfx \in P$.

We start with an observation about the signs of the $t_i$.
By the definition of the $\alpha_i$, $\cos(\alpha_i/2) > 0$
while $\sin(\alpha_i/2)$ and $t_i$ have the same sign as~$\alpha_i$.
Thus $t_i$ may be negative but
the sum $t_{i-1} + t_i$ is positive.
\begin{lemma}\label{lem:tsumpos}
$t_{i-1} + t_i > 0$ for all $i=1,2,3,4$.
\end{lemma}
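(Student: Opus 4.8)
The plan is to reduce the inequality to a statement purely about the signed angles $\alpha_i$ and then exploit the fact that these angles sum to a full turn around the interior point $\bfx$. Since $t_j = \tan(\alpha_j/2)$ with $\alpha_j \in (-\pi,\pi)$, each half-angle $\alpha_j/2$ lies in $(-\pi/2,\pi/2)$, so that $\cos(\alpha_j/2) > 0$. Writing $a := \alpha_{i-1}/2$ and $b := \alpha_i/2$, the tangent addition identity gives
\[
t_{i-1} + t_i = \tan a + \tan b = \frac{\sin(a+b)}{\cos a \cos b}.
\]
Because both cosines are positive, the sign of $t_{i-1}+t_i$ is exactly the sign of $\sin\big((\alpha_{i-1}+\alpha_i)/2\big)$. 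Since $(\alpha_{i-1}+\alpha_i)/2 \in (-\pi,\pi)$, it therefore suffices to prove that $\alpha_{i-1}+\alpha_i > 0$, which forces the half-sum into $(0,\pi)$, where the sine is strictly positive.

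To establish $\alpha_{i-1}+\alpha_i>0$, first I would record that for any interior point $\bfx \in P$ the signed angles sum to a full turn,
\[
\sum_{j=1}^4 \alpha_j = 2\pi.
\]
This follows from the winding-number interpretation: as $\bfy$ traverses $\partial P$ once anticlockwise, the total change in the argument of $\bfy - \bfx$ equals $2\pi$ because $\bfx$ is interior to the simple polygon $P$, and each $\alpha_j$ is precisely the increment in argument accrued along the edge from $\bfp_j$ to $\bfp_{j+1}$ (this increment lies in $(-\pi,\pi)$ because $\bfx$ never lies on an edge). Given this identity, I would write $\alpha_{i-1}+\alpha_i = 2\pi - (\alpha_{i+1}+\alpha_{i+2})$, using cyclic indices. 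Since $\alpha_{i+1} < \pi$ and $\alpha_{i+2} < \pi$, the bracketed term is strictly less than $2\pi$, whence $\alpha_{i-1}+\alpha_i > 0$, as required.

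The only delicate point is the justification of the angle sum $\sum_j \alpha_j = 2\pi$; everything else is an elementary trigonometric manipulation. For a convex $P$ this is immediate, but when $P$ is non-convex some $\alpha_j$ may be negative, and one must appeal to the winding number (equivalently, to the continuity of a branch of the argument along the polygonal loop together with the fact that $\bfx$ is enclosed exactly once). I expect this step to be the main obstacle to a fully rigorous write-up, though it is entirely standard. The rest of the argument is the short computation above, and the hypothesis $n=4$ enters only through the splitting of the four angles into the pair $\{\alpha_{i-1},\alpha_i\}$ and its complement $\{\alpha_{i+1},\alpha_{i+2}\}$, each bounded by $\pi$.
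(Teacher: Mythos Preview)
Your argument is correct and is in fact cleaner than the paper's. Both proofs arrive at the same trigonometric identity
\[
t_{i-1}+t_i=\frac{\sin\bigl((\alpha_{i-1}+\alpha_i)/2\bigr)}{\cos(\alpha_{i-1}/2)\cos(\alpha_i/2)},
\]
and both conclude by showing $0<\alpha_{i-1}+\alpha_i<2\pi$. The difference lies in how that inequality is obtained. The paper argues geometrically by cases: for convex $P$ every $\alpha_j$ is positive; for non-convex $P$ it partitions $P$ into the kernel and two residual regions, and in each residual region verifies by inspection which single $\alpha_j$ is negative and that its magnitude is dominated by each of its neighbours. You instead invoke the global identity $\sum_{j=1}^4 \alpha_j = 2\pi$ (the winding number of $\partial P$ about an interior point) and deduce $\alpha_{i-1}+\alpha_i = 2\pi - (\alpha_{i+1}+\alpha_{i+2}) > 0$ directly from $\alpha_j<\pi$. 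Your route avoids all case analysis and any appeal to a figure; the price, which you rightly flag, is that the winding-number identity for a general simple (possibly non-convex) polygon must be cited or justified, whereas the paper's case-by-case argument is entirely elementary once the picture is drawn.
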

From this lemma it follows that also $w_i > 0$ and $\phi_i > 0$,
a property that was observed in~\cite{Hormann:2004}.

\begin{proof}
If $P$ is convex, then $\alpha_i > 0$ for all $i$ and
$$ t_i = \frac{\sin(\alpha_i/2)}{\cos(\alpha_i/2)} > 0. $$
It remains to consider the case that $P$ is non-convex,
as in Figure~\ref{fig:pos}.
\begin{figure}[ht]
\centerline{\includegraphics[width=0.4\textwidth]{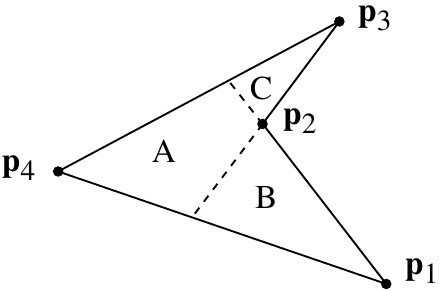}}
\caption{Non-convex quadrilateral.} 
\label{fig:pos}
\end{figure}
If $\bfx$ is in $A$ (the kernel of $P$), then $\alpha_i > 0$
for all $i$ and again $t_i > 0$. Suppose next that $\bfx \in B$.
Then $\alpha_2 < 0$ and $\alpha_1, \alpha_3, \alpha_4 > 0$,
and further $|\alpha_2| < |\alpha_1|$ and $|\alpha_2| < |\alpha_3|$.
Therefore
$$ \alpha_1 + \alpha_2 > 0, \qquad \alpha_2 + \alpha_3 > 0, $$
implying
\[ 0 < \alpha_{i-1} + \alpha_i < 2\pi, \qquad i=1,2,3,4, \]
and so
\[ \sin\left(\frac{\alpha_{i-1} + \alpha_i}{2} \right) > 0, \qquad i=1,2,3,4.\]
Hence
\[
t_{i-1} + t_i = \frac{\sin\left(\frac{\alpha_{i-1} + \alpha_i}{2} \right)}{\cos\left(\frac{\alpha_{i-1}}{2}\right)\cos\left(\frac{\alpha_i}{2} \right)} > 0.
\]
The argument for $\bfx \in C$ is similar.
\end{proof}

Another ingredient in the proof is the following trigonometric identity,
relating $t_1,t_2,t_3,t_4$.
\begin{lemma}\label{lem:Georg1}
$$ t_4 = \frac{p}{q}, \quad\hbox{where}\quad
\begin{array}{l}
p := t_1 + t_2 + t_3 - t_1t_2t_3, \\
q := t_1t_2 + t_1t_3 + t_2t_3 - 1 > 0.
\end{array}
$$
\end{lemma}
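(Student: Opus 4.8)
The plan is to exploit the fact that the four signed angles $\alpha_1,\alpha_2,\alpha_3,\alpha_4$ subtended at the interior point $\bfx$ by the edges of $P$ add up to a full turn. As $i$ runs cyclically through $1,2,3,4$, the unit vectors $\bfe_i$ sweep once around $\bfx$, and since $\bfx$ lies inside the simple closed quadrilateral oriented anticlockwise, the total signed rotation is $2\pi$; that is, $\alpha_1 + \alpha_2 + \alpha_3 + \alpha_4 = 2\pi$. This holds whether $P$ is convex or not, the only subtlety being that some $\alpha_i$ may be negative (as for the regions $B$ and $C$ in Figure~\ref{fig:pos}). Writing $\theta_i := \alpha_i/2$, we then have $\theta_i \in (-\pi/2,\pi/2)$ and $\theta_1 + \theta_2 + \theta_3 + \theta_4 = \pi$, with $t_i = \tan\theta_i$.

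Next I would compute $t_4$ directly. From the angle sum, $\theta_4 = \pi - (\theta_1 + \theta_2 + \theta_3)$, so $t_4 = \tan\theta_4 = -\tan(\theta_1 + \theta_2 + \theta_3)$. Applying the standard tangent-of-a-sum identity for three angles,
$$ \tan(\theta_1 + \theta_2 + \theta_3) = \frac{t_1 + t_2 + t_3 - t_1 t_2 t_3}{1 - t_1 t_2 - t_1 t_3 - t_2 t_3}, $$
and negating both numerator and denominator immediately yields $t_4 = p/q$ with $p$ and $q$ as stated. This step is a routine trigonometric manipulation.

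The remaining, slightly more delicate, task is the strict inequality $q > 0$. Here I would use the companion product expansion
$$ \cos(\theta_1 + \theta_2 + \theta_3) = \cos\theta_1\cos\theta_2\cos\theta_3\,(1 - t_1 t_2 - t_1 t_3 - t_2 t_3), $$
obtained by expanding the triple-angle cosine and factoring out the three cosines. Since each $\theta_i \in (-\pi/2,\pi/2)$, every factor $\cos\theta_i$ is strictly positive, so the sign of $1 - t_1 t_2 - t_1 t_3 - t_2 t_3 = -q$ equals the sign of $\cos(\theta_1 + \theta_2 + \theta_3)$. But $\theta_1 + \theta_2 + \theta_3 = \pi - \theta_4$, whence $\cos(\theta_1 + \theta_2 + \theta_3) = -\cos\theta_4 < 0$, because $\theta_4 \in (-\pi/2,\pi/2)$ forces $\cos\theta_4 > 0$. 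Therefore $-q < 0$, i.e. $q > 0$, as required.

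The main obstacle I anticipate is not the algebra but justifying the angle-sum identity $\alpha_1+\alpha_2+\alpha_3+\alpha_4 = 2\pi$ cleanly in the non-convex case, where the naive picture of four positive angles breaks down. One must argue via the winding number of the boundary around the interior point $\bfx$, or equivalently track the total rotation of the moving frame $\bfe_i$, to see that the signed contributions still accumulate to exactly one full turn. Once this is secured, both the closed form for $t_4$ and the positivity of $q$ fall out of elementary trigonometric identities.
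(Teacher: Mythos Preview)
Your proposal is correct and follows essentially the same route as the paper: both use $\sum_i \alpha_i/2 = \pi$, expand via angle-sum formulas, and deduce $q>0$ from $q = \cos\theta_4 / (\cos\theta_1\cos\theta_2\cos\theta_3)$ with all four cosines positive. The only cosmetic difference is that the paper computes $s_4'$ and $c_4'$ separately and divides at the end, which sidesteps having to invoke the tangent-of-a-sum identity before the nonvanishing of its denominator has been checked; your independent cosine argument for $q>0$ closes that loop anyway.
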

This is a special case of a more general identity for
half-angle tangents involving
symmetric polynomials~\cite{Bronstein:1989}, but
we need to take care that $q \ne 0$.
\begin{proof}
Let $s_i' := \sin(\alpha_i/2)$ and
$c_i' := \cos(\alpha_i/2)$ for $i=1,2,3,4$.
Since $-\pi < \alpha_i < \pi$, $c_i' > 0$.

Since $\sum_{i=1}^4 (\alpha_i/2) = \pi$,
applying the angle-sum formulas for sine and cosine yields
\begin{align*}
s_4' &= \sin\left(\pi-\frac{\alpha_4}{2}\right)
= \sin\left(\frac{\alpha_1}{2} + \frac{\alpha_2}{2} + \frac{\alpha_3}{2}
\right) \cr
&= c_2'c_3's_1' + c_1'c_3's_2' + c_1'c_2's_3' - s_1's_2's_3', \cr
c_4' &= -\cos\left(\pi-\frac{\alpha_4}{2}\right)
= -\cos\left(\frac{\alpha_1}{2} + \frac{\alpha_2}{2} + \frac{\alpha_3}{2}
\right) \cr
&= c_3's_1's_2' + c_2's_1's_3' + c_1's_2's_3' - c_1'c_2'c_3'.
\end{align*}
Since $t_4 = s_4'/c_4'$, we obtain $t_4 = p/q$
by dividing both $s_4'$ and $c_4'$ by $c_1'c_2'c_3'$, which is
positive. Moreover,
$$ p = \frac{s_4'}{c_1' c_2' c_3'} \quad\hbox{and}
\quad q = \frac{c_4'}{c_1' c_2' c_3'}, $$
and so $p$ has the same sign as $s_4'$ and $q$ is positive.
\end{proof}

We now derive an explicit formula for the determinants in
(\ref{eq:Dijkposquad}).
\begin{theorem}\label{thm:D123}
$$ D_i =
\frac{(t_{i-2}+t_{i-1})u_i + (t_i+t_{i+1})u_{i-1}}
{2W^2 r_{i-1} r_i r_{i+1}},
	\quad 1 \le i \le 4. $$
\end{theorem}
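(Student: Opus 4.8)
The plan is to reduce the quotient determinant $D_i = D(\phi_{i-1},\phi_i,\phi_{i+1})$ to a determinant of the unnormalized weights $w_j$, and then to evaluate the latter using the gradient formula of Theorem~\ref{thm:omegaigradient}.

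First I would prove a general reduction valid for any three functions sharing a \emph{common} denominator. Since $\phi_j = w_j/W$, I expect that
\[ D(\phi_{i-1},\phi_i,\phi_{i+1}) = \frac{1}{W^3}\,D(w_{i-1},w_i,w_{i+1}). \]
To see this, multiply each of the three columns of the determinant~(\ref{eq:det}) defining $D(\phi_{i-1},\phi_i,\phi_{i+1})$ by $W$, which scales the value by $W^3$; the top (function) row then reads $(w_{i-1},w_i,w_{i+1})$, and a typical lower entry becomes $W\,\partial_x(w_j/W) = (w_j)_x - (W_x/W)\,w_j$. Adding $(W_x/W)$ times the top row to the second row, and $(W_y/W)$ times the top row to the third row, leaves the determinant unchanged and restores the plain gradient entries $(w_j)_x,(w_j)_y$, yielding $D(w_{i-1},w_i,w_{i+1})$. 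Comparing with the asserted denominator $2W^2 r_{i-1}r_ir_{i+1}$, it remains to prove
\[ D(w_{i-1},w_i,w_{i+1}) = \frac{W}{2r_{i-1}r_ir_{i+1}}\big((t_{i-2}+t_{i-1})u_i + (t_i+t_{i+1})u_{i-1}\big); \]
so the real content is that the factor $W=\sum_k w_k$ must emerge from the weight determinant.

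Next I would expand $D(w_{i-1},w_i,w_{i+1})$ using the cofactor expansion of~(\ref{eq:det}) along its first row,
\[ D(a,b,c) = a\,(\nabla b\times\nabla c) + b\,(\nabla c\times\nabla a) + c\,(\nabla a\times\nabla b), \]
and substitute $w_j = (t_{j-1}+t_j)/r_j$ together with the gradient formula~(\ref{eq:gradient}). Every gradient is a combination of the rotated unit vectors $\bfe_k^\perp$, so each cross product reduces, via $\bfe_a^\perp\times\bfe_b^\perp = \bfe_a\times\bfe_b$, to a cross product of the $\bfe_k$ themselves. For consecutive vectors $\bfe_k\times\bfe_{k+1}=s_k$, while for a quadrilateral the four signed angles wind once around $\bfx$, so $\sum_k\alpha_k = 2\pi$; this lets me rewrite the longer-range cross products (for instance $\bfe_{i-1}\times\bfe_{i+2} = -s_{i+2}$ and $\bfe_i\times\bfe_{i+2}=\sin(\alpha_i+\alpha_{i+1})$), and the cross products of coincident directions vanish since $\bfe_{i+2}=\bfe_{i-2}$ when $n=4$. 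I would then clear everything through the half-angle identities~(\ref{eq:sct}), using in particular the simplification $u_j s_j = 2t_j(1/r_j+1/r_{j+1})$.

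The hard part will be the final consolidation: collecting the resulting $u_ju_k s_m$ terms and showing that they reorganize into the single product $W\cdot N_i$, where $N_i=(t_{i-2}+t_{i-1})u_i+(t_i+t_{i+1})u_{i-1}$. The emergence of $W$ is not visible term by term: the diagonal contributions, where the $u$ and $s$ indices match, collapse cleanly, but the cross terms carry the two-step sines $\sin(\alpha_i+\alpha_{i+1})$, and I expect to need the angle-sum relation underlying Lemma~\ref{lem:Georg1} to convert these so that the coefficients regroup as $\sum_k (t_{k-1}+t_k)/r_k = W$. I would organize the bookkeeping by separating the diagonal and cross terms and tracking signs carefully, since this is where errors are most likely.
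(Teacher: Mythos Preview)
Your proposal is correct and follows essentially the same route as the paper: reduce to $D(w_{i-1},w_i,w_{i+1})/W^3$, expand along the top row via the gradient formula of Theorem~\ref{thm:omegaigradient}, evaluate the $\bfe_k$ cross products, and then use the identity $u_js_j=2t_j(1/r_j+1/r_{j+1})$ (summing to $W$) together with Lemma~\ref{lem:Georg1} to finish. The paper organizes your ``hard part'' by writing $E_1 = 2W\big((t_3+t_4)u_1+(t_1+t_2)u_4\big) + (u_1u_3-u_2u_4)K$ and then checking $K=0$ via Lemma~\ref{lem:Georg1}; you may find this factorization a clean way to structure the bookkeeping you anticipate.
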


By Lemma~\ref{lem:tsumpos}, Theorem~\ref{thm:D123} implies that
$D_i(\bfx) > 0$ for all $\bfx \in P$ and $i=1,2,3,4$.

\begin{proof}
Observe first that by~\cite[Lemma 2]{Floater:10a},
$$ D_i = \frac{1}{W^3} D(w_{i-1},w_i,w_{i+1}), $$
and we can write this latter determinant as
$$ D(w_{i-1},w_i,w_{i+1}) =
\left| \begin{matrix}
w_{i-1} & w_i & w_{i+1} \\
\nabla w_{i-1} & 
\nabla w_i & 
\nabla w_{i+1}
 \end{matrix}
 \right|. $$
From (\ref{eq:wi2}) and Theorem~\ref{thm:omegaigradient},
$$ D(w_{i-1},w_i,w_{i+1}) = \frac{1}{4r_{i-1}r_ir_{i+1}} E_i, $$
where
\begin{equation}\label{eq:Eijk}
E_i :=
\left| \begin{matrix}
t_{i-2}+t_{i-1} & t_{i-1}+t_i & t_i+t_{i+1} \\
\bfa_{i-1} & \bfa_i & \bfa_{i+1},
 \end{matrix}
 \right|,
\end{equation}
and
$$
 \bfa_j := u_{j-1} \bfe_{j-1}^\perp - u_j \bfe_{j+1}^\perp,
$$
and it remains to show that
$$
E_i = 2W\big(
	(t_{i-2}+t_{i-1})u_i + (t_i+t_{i+1})u_{i-1}\big),
	\quad 1 \le i \le 4.
$$
We may assume without loss of generality that $i=1$ and
the task is to show that
$$ E_1 = 2W\big((t_3+t_4)u_1 + (t_1+t_2)u_4\big). $$
From (\ref{eq:Eijk}),
$$
E_1 = (t_3+t_4) D_{12} + (t_4+t_1) D_{24} + (t_1+t_2) D_{41},
$$
where $D_{i,j} := \bfa_i \times \bfa_j$, and since
$\bfa^\perp \times \bfb^\perp = \bfa \times \bfb$
for any vectors $\bfa,\bfb$ in $\RR^2$,
\begin{align*}
D_{12} &=
(u_4 \bfe_4 - u_1 \bfe_2) \times
(u_1 \bfe_1 - u_2 \bfe_3) 
= u_1 u_4 s_4 + u_2 u_4 s_3 + u_1^2 s_1 + u_1 u_2 s_2, \cr
D_{24} &=
(u_1 \bfe_1 - u_2 \bfe_3) \times
(u_3 \bfe_3 - u_4 \bfe_1)
= (u_1 u_3 - u_2 u_4) (\bfe_1 \times \bfe_3), \cr
D_{41} &=
(u_3 \bfe_3 - u_4 \bfe_1) \times
(u_4 \bfe_4 - u_1 \bfe_2)
= u_3 u_4 s_3 + u_1 u_3 s_2 + u_4^2 s_4 + u_1 u_4 s_1.
\end{align*}

Next observe that
\begin{equation}\label{eq:Wsum}
\frac{1}{2} \sum_{j=1}^4 s_j u_j
 = \sum_{j=1}^4 \Big( \frac{1}{r_j} + \frac{1}{r_{j+1}} \Big) t_j
= \sum_{j=1}^4 \frac{1}{r_j} (t_j + t_{j-1})
= \sum_{j=1}^4 w_j = W.    
\end{equation}
Therefore,
\begin{align*}
D_{12} & 
= 2W u_1 - (u_1 u_3 - u_2 u_4) s_3, \cr
D_{41} & 
= 2W u_4 + (u_1 u_3 - u_2 u_4) s_2,
\end{align*}
and so
$$ E_1 = 2W\big((t_3+t_4)u_1 + (t_1+t_2)u_4\big) + (u_1u_3 - u_2u_4) K, $$
where
$$ K := -(t_3 + t_4) s_3 + (t_4 + t_1) (\bfe_1 \times \bfe_3)
+ (t_1+t_2) s_2, $$
and it remains to show that $K = 0$.

By \eqref{eq:sct},
$$ \bfe_1 \times \bfe_3 =
\sin(\alpha_1 + \alpha_2) = s_1c_2 + c_1s_2 = 
2 \frac{(t_1+t_2)(1-t_1t_2)}{(1+t_1^2)(1+t_2^2)}, $$
and the only occurrences of $t_4$ in $K$ are in the sums
$t_3 + t_4$ and $t_4 + t_1$.
We can eliminate $t_4$ from these sums using Lemma~\ref{lem:Georg1}:
\begin{align*}
t_3 + t_4 & = \frac{t_3q + p}{q} = \frac{(1+t_3^2)(t_1+t_2)}{q}, \cr
t_4 + t_1 & = \frac{p + t_1q}{q} = \frac{(1+t_1^2)(t_2+t_3)}{q}.
\end{align*}
Thus it follows that
$$ K = \frac{2(t_1+t_2)}{(1+t_2^2)q}
\Big(- t_3(1+t_2^2) + (t_2+t_3)(1-t_1t_2) + t_2q \Big), $$
which by the definition of $q$ turns out to be zero.
\end{proof}

\section{Determinants on the boundary}

By~\cite[Remark 4.5]{Hormann:06}, $\phi_1,\phi_2,\phi_3,\phi_4$
are $C^\infty$ at all points of $\partial P$ except the vertices.
The following lemma shows that the determinant condition (\ref{eq:Dijkposquad})
holds at such points.
\begin{lemma}\label{lem:D412_boundary}
For any $\bfy$ in the relative interior of the edge $[\bfp_1,\bfp_2]$,
$$
D_i(\bfy)
=
\begin{cases} 
\frac{w_4(\bfy)} {2\|\bfp_2-\bfp_1\|} & \hbox{if $i=1$,} \cr
\frac{w_3(\bfy)} {2\|\bfp_2-\bfp_1\|} & \hbox{if $i=2$,} \cr
0 & \hbox{if $i=3,4$.}
\end{cases}
$$
\end{lemma}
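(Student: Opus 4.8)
The plan is to read off $D_i(\bfy)$ as a one-sided limit of the interior expression furnished by Theorem~\ref{thm:D123}. Since $\phi_1,\phi_2,\phi_3,\phi_4$ are $C^\infty$ at the non-vertex boundary point $\bfy$, each $D_i$ extends continuously there, so I may compute $D_i(\bfy)=\lim_{\bfx\to\bfy}D_i(\bfx)$ over $\bfx\in P$ using
$$ D_i = \frac{(t_{i-2}+t_{i-1})u_i + (t_i+t_{i+1})u_{i-1}}{2W^2 r_{i-1}r_i r_{i+1}}. $$
The whole argument is then an asymptotic analysis of this ratio as the edge is approached.

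First I would record the limiting behaviour of the ingredients. As $\bfx\to\bfy$ the points $\bfp_1,\bfx,\bfp_2$ become collinear, so the signed angle $\alpha_1\to\pi$ and hence $t_1\to+\infty$, whereas $t_2,t_3,t_4$ and all of $r_1,r_2,r_3,r_4$ tend to finite positive limits, with the key relation $r_1+r_2=L:=\|\bfp_2-\bfp_1\|$. It follows that $u_1=(1/r_1+1/r_2)(1+t_1^2)$ diverges like $t_1^2$, while $u_2,u_3,u_4$ stay bounded, and from~\eqref{eq:wi2} that $W\sim t_1(1/r_1+1/r_2)$, so $W^2$ diverges like $t_1^2$.

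The proof then splits into a leading-order comparison for each $i$. For $i=1$ the numerator $(t_3+t_4)u_1+(t_1+t_2)u_4$ is dominated by its first term, of order $t_1^2$, matching the order of the denominator; the factors $t_1^2$ cancel, and simplifying with $(1/r_1+1/r_2)r_1r_2=r_1+r_2=L$ collapses the limit to $(t_3+t_4)/(2r_4L)=w_4/(2L)$. The case $i=2$ is identical in spirit and gives $w_3/(2L)$. For $i=3$ and $i=4$ the diverging factor $u_1$ appears in neither $u_i$ nor $u_{i-1}$, so the numerator grows only like $t_1$ against a denominator of order $t_1^2$, forcing $D_i(\bfy)=0$.

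The hard part is not conceptual but bookkeeping: tracking precisely which quantities blow up and at what rate, and verifying that the divergent $t_1^2$ factors cancel exactly, which rests on the two elementary facts $r_1+r_2=L$ and $(1/r_1+1/r_2)r_1r_2=r_1+r_2$. As an independent sanity check on the two vanishing cases, I would note that $\phi_3\equiv\phi_4\equiv0$ along the edge makes their tangential derivatives vanish, so $\nabla\phi_3$ and $\nabla\phi_4$ are both normal to the edge and thus parallel; expanding $D_3=D(\phi_2,\phi_3,\phi_4)$ and $D_4=D(\phi_3,\phi_4,\phi_1)$ along their first rows leaves only a multiple of $\nabla\phi_3\times\nabla\phi_4=0$.
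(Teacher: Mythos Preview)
Your proposal is correct and follows essentially the same route as the paper: compute $D_i(\bfy)$ as the interior limit of the formula from Theorem~\ref{thm:D123}, observe that $t_1\to\infty$ while all other $t_j,r_j$ stay finite, and compare the $t_1^2$ growth in numerator versus denominator for each $i$. Your concluding sanity check (that $\nabla\phi_3$ and $\nabla\phi_4$ are parallel on the edge, forcing $D_3=D_4=0$) is a pleasant independent confirmation not present in the paper's proof.
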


\begin{proof}
We use the fact that
$$ D_i(\bfy) = \lim_{\bfx \to \bfy, \, \bfx \in P} D_i(\bfx). $$
As $\bfx \to \bfy$ for $\bfx \in P$,
$t_1 \to \infty$ while
$t_2,t_3,t_4$ and
$r_1,r_2,r_3,r_4$ have finite limits.

Consider the case $i=1$, so that for $\bfx \in P$,
$$ D_1 = \frac{(t_3+t_4)u_1 + (t_1+t_2)u_4}{2W^2 r_4 r_1 r_2}. $$
Its numerator is
$$ (t_3+t_4)\left(\frac{1}{r_1} + \frac{1}{r_2}\right)
   t_1^2 + O(t_1), \quad \hbox{as $t_1 \to \infty$}. $$
Recalling \eqref{eq:Wsum},
$$ W = \left(\frac{1}{r_1} + \frac{1}{r_2}\right) t_1
+ O(1), \quad \hbox{as $t_1 \to \infty$}, $$
and so the denominator of $D_1$ is
$$ 2r_4r_1r_2\left(\frac{1}{r_1} + \frac{1}{r_2}\right)^2 t_1^2
+ O(t_1), \quad \hbox{as $t_1 \to \infty$}. $$
Therefore,
$$ \lim_{\bfx \to \bfy, \, \bfx \in P} D_1 =
\frac{t_3+t_4} {2r_4(r_1+r_2)}
 = \frac{w_4} {2\|\bfp_2-\bfp_1\|}. $$

When $i=2$,
$$ D_2 = \frac{(t_4+t_1)u_2 + (t_2+t_3)u_1}{2W^2 r_1 r_2 r_3},
$$
and its numerator is
$$ (t_2+t_3)\left(\frac{1}{r_1} + \frac{1}{r_2}\right)
  t_1^2 + O(t_1), $$
and its denominator is
$$ 2r_1r_2r_3\left(\frac{1}{r_1} + \frac{1}{r_2}\right)^2 t_1^2
+ O(t_1), $$
and so
$$ \lim_{\bfx \to \bfy, \, \bfx \in P} D_2 =
\frac{t_2+t_3} {2r_3(r_1+r_2)}
 = \frac{w_3} {2\|\bfp_2-\bfp_1\|}. $$

For the cases $i=3,4$,
the numerator of $D_i$ is $O(t_1)$ and its denominator grows like $t_1^2$, and so
\[ \lim_{\bfx \to \bfy, \, \bfx \in P} D_i = 0. \qedhere \] 
\end{proof}

\section{Global injectivity}\label{sec:global}

We will now complete the proof of Theorem~\ref{thm:main}.
By~Theorem~\ref{thm:D123} and Lemma~\ref{lem:D412_boundary},
the Jacobian of $\bff$
is positive in $P$ and at all points of $\partial P$
except the four vertices.
So by the Inverse Function Theorem (IFT)~\cite[Theorem 9.24]{rudin:1983},
$\bff$ is locally injective at all such points.
It remains to show that this implies the
(global) injectivity of $\bff$ as a mapping from $\Pbar$ to $\Qbar$.
The method of proof uses the path-connectedness of $P$
and the compactness of~$\Pbar$,
similar to that of~\cite{Kestelman:1971}.
Let us say that $\bfx \in \Pbar$ is \emph{multiple} if there exists
$\bfy \in \Pbar$, $\bfy \ne \bfx$, such that $\bff(\bfy) = \bff(\bfx)$.
Since $\bff$ maps $\partial P$ to $\partial Q$ injectively and
since~$\bff$ maps $P$ to $Q$, $\partial P$ contains no
multiple points, and the task is to show that~$P$
contains no multiple points either.

\bigskip
\noindent {\it Proof of Theorem~\ref{thm:main}.}
Suppose, for the sake of contradiction, that
there exists some $\bfx \in P$ that is multiple.
Then choose any point $\bfx_B \in \partial P$ that is
not a vertex of $P$ and such that the interior of the
line segment $[\bfx,\bfx_B]$ lies entirely in $P$,
and let $\bfgamma(t)$ be the parameterization
$$ \bfgamma(t) := (1-t) \bfx + t \bfx_B, \quad 0 \le t \le 1. $$
Let
$$ T := \{t \in [0,1] : \hbox{$\bfgamma(t)$ is multiple} \}, $$
and let $t_* := \sup T$.

We will next show that $\bfgamma(t_*)$ is multiple.
If $t_* = 0$ then $\bfgamma(t_*) = \bfx$, which is multiple.
So suppose that $t_* > 0$.
Then there exists a sequence $(t_n)$ in~$T$ converging to $t_*$.
Since $\bfgamma$ is continuous, $\bfgamma(t_n) \to \bfgamma(t_*)$.
Since $\bfgamma(t_n)$ is multiple, there is some $\bfx_n \in \Pbar$,
$\bfx_n \ne \bfgamma(t_n)$, such that $\bff(\bfx_n) = \bff(\bfgamma(t_n))$.
Since $\Pbar$ is compact, the sequence $(\bfx_n)$
has a convergent subsequence in $\Pbar$, and so we may assume
that both $\bfgamma(t_n) \to \bfgamma(t_*)$ and
$\bfx_n \to \bfx_*$ for some $\bfx_* \in \Pbar$.

Since $J(\bff)(\bfgamma(t_*)) > 0$, by IFT
there are open sets $U$ and $V$ in $\RR^2$
such that $\bfgamma(t_*) \in U$, $\bff(\bfgamma(t_*)) \in V$,
$\bff$ is injective on $U$ and $\bff(U) = V$.
Then for large enough $n$, $\bfgamma(t_n) \in U$ and
so $\bfx_n \not\in U$ and so $\bfx_* \not \in U$, and
we deduce that $\bfx_* \ne \bfgamma(t_*)$.
On the other hand, since $\bff$ is continuous in $\Pbar$,
$\bff(\bfgamma(t_n)) \to \bff(\bfgamma(t_*))$
and $\bff(\bfx_n) \to \bff(\bfx_*)$, and therefore,
$\bff(\bfx_*) = \bff(\bfgamma(t_*))$.
Thus we have shown that $\bfgamma(t_*)$ is multiple as claimed.

We now consider the two cases: (i) $t_* = 1$ and
(ii) $t_* < 1$.

\noindent Case (i) If $t_* = 1$ then $\bfgamma(t_*) = \bfx_B$.
This implies that $\bfx_B$ is multiple which is a contradiction.

\noindent Case (ii) If $t_* < 1$ then $\bfgamma(t_*) \in P$
and so $\bfx_* \in P$ as well.
Then $J(\bff)(\bfx_*) > 0$ and so again by IFT,
there are open sets $U'$ and $V'$ in $\RR^2$
such that $\bfx_* \in U'$, $\bff(\bfx_*) \in V'$,
$\bff$ is injective on $U'$ and $\bff(U') = V'$.
Let $d := \| \bfx_* - \bfgamma(t_*) \|$.
We can find an open disk $D \subseteq U$ centred at $\bfgamma(t_*)$
with radius at most $d/2$
and an open disk $D' \subseteq U'$ centred at $\bfx_*$
with radius at most $d/2$.
Further by IFT, the inverse $\bfg$ of $\bff$ from $V$ to $U$
is continuous and the inverse $\bfg'$ of $\bff$ from $V'$ to $U'$
is continuous.
Therefore, $\bff(D) = \bfg^{-1}(D)$ and $\bff(D') = (\bfg')^{-1}(D)$
are open sets.
Let $V'' := \bff(D) \cap \bff(D')$ and
let $U_2 := \bfg(V'')$ and $U_2' := \bfg'(V'')$.
For small enough $\epsilon > 0$,
$\bfgamma(t_*+\epsilon) \in U_2$ and then
$\bff(\bfgamma(t_*+\epsilon)) \in V''$ and
$\bfx_\epsilon := \bfg'(\bff(\bfgamma(t_*+\epsilon))) \in U_2'$.
Then $\bfx_\epsilon \ne \bfgamma(t_*+\epsilon)$ and
$\bff(\bfx_\epsilon) = \bff(\bfgamma(t_*+\epsilon))$
and so $\bfgamma(t_*+\epsilon)$ is multiple which contradicts
the definition of $t_*$.
\quad $\square$

\section*{Declaration}

The authors declare that they have no conflicting interests.

\bibliography{coordinates}

\end{document}